\newtheorem{thm}{Theorem}[section]
\newtheorem{cor}[thm]{Corollary}
\newtheorem{prop}[thm]{Proposition}
\theoremstyle{definition}
\numberwithin{equation}{section}
 \theoremstyle{remark}
\newcommand{\bfh}{\textbf{h}}
\newcommand{\pab}{P'_{a,b}}
\newcommand{\n}{\nabla}
\newcommand{\ld}{\Lambda}
\newcommand{\intm}{\int_{M^4}}
\newcommand{\fp}{\Bbb {F}P^m}
\newcommand{\al}{\alpha}
\begin{document}
\title{\text The sharp estimates for the first eigenvalue of Paneitz operator on $4$-dimensional submanifolds}
\author []{ Daguang Chen  and Haizhong Li}
\email { dgchen@math.tsinghua.edu.cn,hli@math.tsinghua.edu.cn}
\address{Department of
Mathematical Sciences, Tsinghua University, Beijing, 100084, P.R.
China}

\subjclass{}
\thanks {The second author was partly supported by NSFC grant No.10971110. }
\keywords{The first eigenvalue, Paneitz  operator, Mean curvature}

\begin{abstract}In this note,  we obtain the sharp estimates for the first
eigenvalue of Paneitz operator for $4$-dimensional compact submanifolds in Euclidean
space. Since unit spheres and  projective spaces can be canonically imbedded into
Euclidean space, the corresponding estimates for the first eigenvalue are also obtained.
\end{abstract}
\maketitle
\renewcommand{\sectionmark}[1]{}

\section{introduction}

Given a smooth 4-dimensional Riemannian manifold $(M^4,g)$, the Paneitz operator, discovered
in \cite{Paneitz83}, is the fourth-order operator defined by
\begin{equation*}
Pf=\Delta^2f-\text{div}\Big(\frac23 R \ I-2Ric\Big)df, \qquad \text{for} \quad f \in
C^\infty(M^4),
\end{equation*}
where $\Delta$ is the scalar Laplacian defined by $\Delta=\text{div} d$, $\text{div}$ is
the divergence with respect to $g$, $R, Ric$ are the scalar curvature and Ricci curvature
respectively. Beautiful works on Paneitz operator $P$ have been developed recently by
Chang \cite{Chang05}, Branson-Chang-Yang \cite{BCY92}, Chang-Yang \cite{CY95},
Chang-Gursky-Yang \cite{CGY99}, and Gursky \cite{Gursky98}.  The Paneitz operator was
generalized to higher dimensions by Branson \cite{Branson}.

In \cite{CHY2004,Gursky99,HR2001,XuYang2001}, the authors investigated positivity of the
Paneitz operator. In analogy with the conformal volume in \cite{LY82}, Xu and Yang
\cite{XuYang2002} defined n-conformal energy for compact 4-dimensional Riemannian
manifold immersed in an n-dimensional sphere $\Bbb S^n(1)$. In the same paper
\cite{XuYang2002}, the upper bound for the first eigenvalue of Paneitz operator was
bounded by using n-conformal energy. In \cite{ChenLi2009}, we obtained the sharp
estimates for the second eigenvalue of Paneitz operator for compact n($n\geq
7$)-dimensional submanifolds in an Euclidean space and an unit sphere.

Assume that $M^n$ is a compact Riemannian manifold immersed into Euclidean space $\Bbb
R^N$. In \cite{Reilly77}, Reilly  obtained the well-known estimate for the  first
eigenvalue $\lambda_1$ of Laplacian
\begin{equation}\label{Reilly77}
\lambda_1\leq \frac{n}{Vol(M)}\intm |\textbf{H}|^2dv,
\end{equation}
where $\textbf{H}$ is the mean curvature of immersion $M^n$ in $\Bbb R^N$. In
\cite{SI92}, Soufi and Ilias obtained the corresponding estimates for submanifolds in
sphere $\Bbb S^N(1)$, hyperbolic space $\Bbb H^N(-1)$ and some other ambient spaces.
Motivated by \cite{LY82}, Soufi and Ilias \cite{SI00} also obtained the sharp estimates
for the second eigenvalue of Schr\"{o}dinger operator for submanifolds in space form
$\Bbb R^N$, $\Bbb S^N(1)$ and hyperbolic space $\Bbb H^N(-1)$.

The aim of this paper is to obtain the Reilly-type sharp estimates for the first eigenvalue
of Paneitz operator in terms of the extrinsic geometry of the compact
4-dimensional submanifold $M^n$ in space form $R^N$ and Euclidean unit sphere $\Bbb S^{N-1}$.

\section{Preliminaries}
\subsection{Some formulaes in submanifolds}
Let $M$ be an n-dimensional compact ~Riemannian manifold and let
$X=(x^1,\cdots,x^N):M\hookrightarrow\mathbb{ R}^N$ be an isometric immersion of $M$ in
$\mathbb {R}^N$. For a fixed point $q\in M$, in a neighborhood of $q$, we can choose an
unit orthonomal basis $\{e_i,e_\alpha\}$ with $e_i$ tangent to $M$ and $e_\alpha$ normal
to $M$ for $1\leq i\leq n, n+1\leq \alpha \leq N$.

It is well known that (\cite{CC2008,CL2007,Li2002})

\begin{equation}\label{dr}
\sum_{ p=1}^N|\n x^p|^2=n,
\end{equation}
\begin{equation}\label{meancur}
\Delta X=\sum h_{ii}^\al e_\al=n\textbf{H},
\end{equation}
\begin{equation}\label{zero}
\sum_{p=1}^N\Delta x^p\n x^p=0,
\end{equation}
where $h^\alpha_{ij}$ is components of the second fundamental form, where
$\textbf{H}=\frac 1n\sum_{\alpha=n+1}^N\sum_{i=1}^n h^\al_{ii}e_\al$ is the mean
curvature vector field of $M$, $\Delta$ is defined by $ \Delta=\text{div}d$. Since the
ambient space is Euclidean space, the Gauss equation imply
\begin{equation}\label{Gauss}
R=n^2|\textbf{H}|^2-|\textbf{h}|^2
\end{equation}
where $\textbf{h}$, $R$ denote the second fundamental form and
scalar curvature of $M$, respectively.

\subsection{High order mean curvature}
Let $(M^n, g)$ be an n-dimensional submanifold in an $N$-dimensional space form
$R^N(\kappa)$,
 where $R^N(\kappa)$ is Euclidean space $\Bbb R^N$ when $\kappa=0$, $R^N(1)$ is
 a unit sphere $\Bbb S^N$ when $\kappa=1$.
 Suppose $\{e_i\}$ is a local orthonormal basis for $\Gamma(TM^n)$
with dual basis $\{\theta_i\}$ and $\{e_\alpha\}$ is a local orthonormal basis for the
normal bundle of $x : M^n \longrightarrow R^N(\kappa)$, denote
$\textbf{h}_{ij}=\sum_{\alpha=n+1}^Nh^\alpha_{ij}e_{\alpha}$, $(\textbf{h}_{ij})$ is the
vector matrix with respect to the frame $\{e_1, \dots,  e_n\}$ on $M^n$. We define the
following $(0, 2)$-tensor $T_r$ for $r \in\{0,1, \dots,  n-1\}$, (see
\cite{CL2007,Grosjean02,Reilly77}): If $r$ is even, we set
\begin{equation}\label{Trenven}
\begin{aligned}
T_r&=\frac{1}{r!} \sum_{\substack{i_1\cdots i_r i\\j_1\cdots j_r j}}\delta_{j_1\cdots j_r j}^{i_1\cdots i_r
i}<\bfh_{i_1j_1},\bfh_{i_2j_2}>\cdots<\bfh_{i_{r-1}j_{r-1}},\bfh_{i_rj_r}>\theta_i\otimes\theta_j\\
&=\sum_{i,j}T^i_{rj}\theta_i\otimes\theta_j,
\end{aligned}
\end{equation}
where $\delta_{j_1\cdots j_r j}^{i_1\cdots i_r i}$ are the
generalized Kronecker symbols and
\begin{equation}\label{Trij}
T^i_{rj}=\frac{1}{r!} \sum_{\substack{i_1\cdots i_r\\j_1\cdots j_r}}\delta_{j_1\cdots j_r j}^{i_1\cdots i_r
i}<\bfh_{i_1j_1},\bfh_{i_2j_2}>\cdots<\bfh_{i_{r-1}j_{r-1}},\bfh_{i_rj_r}>.
\end{equation}
We also set
\begin{equation}\label{Tr-1}
\begin{aligned}
T_{r-1}&=\frac{1}{(r-1)!} \delta_{j_1\cdots j_r j}^{i_1\cdots i_r
i}<\bfh_{i_1j_1},\bfh_{i_2j_2}>\cdots<\bfh_{i_{r-3}j_{r-3}},\bfh_{i_{r-2}j_{r-2}}>\bfh_{i_{r-1}j_{r-1}}
\theta_i\otimes\theta_j\\
&=T^\alpha_{r-1 ij}\theta_i\otimes\theta_je_\alpha,
\end{aligned}
\end{equation}
where
\begin{equation}\label{Talpha}
T^\alpha_{r-1 ij}=\frac{1}{(r-1)!} \delta_{j_1\cdots j_r
j}^{i_1\cdots i_r
i}<\bfh_{i_1j_1},\bfh_{i_2j_2}>\cdots<\bfh_{i_{r-3}j_{r-3}},\bfh_{i_{r-2}j_{r-2}}>h^\alpha_{i_{r-1}j_{r-1}}.
\end{equation}

For any even integer $r\in\{0,1,\cdots, n-1\}$, the $r-$th mean curvature function $H_r$
and $(r+1)$-th mean curvature vector field $\textbf{H}_{r+1}$ as follows
\begin{equation}\label{hmean}
H_r=\displaystyle{\frac{1}{\left(
               \begin{array}{c}
                 n \\
                 r \\
               \end{array}
             \right)}}\frac 1rT^\alpha_{r-1 ij}h^\alpha_{ij},
\end{equation}
\begin{equation}\label{sr+1}
\textbf{H}_{r+1}=\displaystyle{\frac1{\left(
               \begin{array}{c}
                 n \\
                 r+1 \\
               \end{array}
             \right)}}\frac{1}{r+1}T^i_{rj}h^\alpha_{ij}e_\alpha£¬
\end{equation}
where $\left(
         \begin{array}{c}
           n \\
           r \\
         \end{array}
       \right)
$ is  the binomial coefficient. By convention, we put
$H_0=1.$

\subsection{ Submanifolds in projective space}
Let $\Bbb F$ denote the field $\Bbb R$ of real numbers, the field
$\Bbb C$ of complex numbers or the field $\Bbb Q$ of quaternions.
Let $\Bbb FP^m$ denote the $m$-dimensional projective space over
$\Bbb F$. The projective space $\Bbb FP^m$ are endowed  standard
Riemannian metric whose  sectional curvature  are either constant
and equal to $1$ or pinched between $1$ and $4$ $( \Bbb F=\Bbb C \
\mbox{or}\ \Bbb Q)$.
 It is well known that there exist the first standard embedding of
 projective spaces into Euclidean spaces (see \cite{Chen84}).
 Let $\rho: \fp\longrightarrow \textsl{H}_{m+1}(\Bbb F)$
 be the first standard embedding of projective spaces into Euclidean
 spaces, where
 $$\textsl{H}_{m+1}(\Bbb F)=\left\{A\in  \textsl{M}_{m+1}(\Bbb F)|A=\bar
 A^t\right\},
 $$
$\textsl{M}_{m+1}(\Bbb F)$ denote the space of $(m+1)\times (m+1)$
matrices over $\Bbb F$. For convenience, we introduce the integers
$$
d(\Bbb F)=dim_{\Bbb R}\Bbb F=\left \{\begin{array}{ll} 1  &\qquad
\mbox{if}\quad\Bbb F=\Bbb R\\
2  &\qquad \mbox{if}\quad \Bbb F=\Bbb C\\
4  &\qquad \mbox{if}\quad\Bbb F=\Bbb Q.
\end{array}
\right.
$$

\begin{prop} (see Lemma 6.3 in  \cite{Chen84})
Let $f: M^n\longrightarrow \fp$ be an isometric immersion and let $\textbf{H}$ and
$\textbf{H}^\prime$ be the mean curvature vector fields of the immersion $f$ and $\rho
\circ f$ respectively. Then one obtains
$$
|H^\prime|^2=|H|^2+ \frac{4(n+2)}{3n}+\frac2{3n^2}\sum_{i\neq
j}\tilde K(e_i, e_j)
$$
where  $\{e_i\}^n_{i=1}$ is the local  orthonormal basis of $\Gamma(TM)$, $\tilde K$ is
the section curvature of $\fp$ which can be expressed by
$$
\tilde K(e_i, e_j)=\left \{
\begin{array}{ll}
1, & \hbox{if}\ \Bbb F=\Bbb R ; \\
1+3(e_i\cdot Je_j)^2, & J\ \mbox{is the complex structure of}\ \Bbb{C}P^m, \hbox{if}\ \Bbb F=\Bbb C;\\
1+3\sum_{r=1}^3(e_i\cdot J_r e_j)^2, & J_r\ \mbox {is the
quaternionic structure of}\ \Bbb {Q}P^m, \hbox{if}\ \Bbb F=\Bbb Q .
\end{array}
\right.
$$
\end{prop}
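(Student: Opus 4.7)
The plan is to reduce the statement to an explicit linear-algebraic identity on the first standard embedding $\rho:\fp\to H_{m+1}(\Bbb F)$ via the second-fundamental-form rule for composed isometric immersions. I would first apply
\[
h_{\rho\circ f}(X,Y)=h_\rho(df(X),df(Y))+d\rho(h_f(X,Y))
\]
and trace over a local orthonormal basis $\{e_i\}$ of $T_pM$, setting $v_i=df(e_i)$, to obtain
\[
n\textbf{H}'=d\rho(n\textbf{H})+\sum_i h_\rho(v_i,v_i).
\]
Since $d\rho(\textbf{H})$ is tangent to $\fp$ at $f(p)$ while the $h_\rho$-terms are normal to $\fp$ inside the flat ambient $H_{m+1}(\Bbb F)$, these two pieces are orthogonal, and
\[
n^2|\textbf{H}'|^2=n^2|\textbf{H}|^2+\Bigl|\sum_i h_\rho(v_i,v_i)\Bigr|^2.
\]
The entire task is then to identify this squared norm with $\frac{4n(n+2)}{3}+\frac{2}{3}\sum_{i\neq j}\tilde K(e_i,e_j)$.

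For the explicit shape of $h_\rho$ I would exploit $\rho([v])=v\bar v^t/(\bar v^t v)$ and differentiate $\rho(\gamma(t))$ twice along the geodesic $\gamma(t)=\cos t\,v+\sin t\,X$ (for a unit representative $v$ and a Hermitian-orthogonal unit $X$). This yields $h_\rho(X,X)=-2v\bar v^t+2X\bar X^t$ after checking that the resulting matrix is orthogonal, in the rescaled Hilbert--Schmidt sense, to the tangent space of $\fp$ at $v\bar v^t$. Substituting gives
\[
\sum_i h_\rho(v_i,v_i)=-2n\,v\bar v^t+2\sum_i v_i\bar v_i^t,
\]
and the identities $|v\bar v^t|^2=1$ and $\langle v\bar v^t,v_i\bar v_i^t\rangle=|\bar v^t v_i|^2=0$ reduce the squared norm (in the isometric normalization) to the combination $2n^2+2\sum_{i,j}|\bar v_i^t v_j|^2$.

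The residual sum is handled by decomposing $\bar v_i^t v_j\in\Bbb F$ into its real and imaginary parts. Real orthonormality of $\{v_i\}$ forces $\mathrm{Re}(\bar v_i^t v_j)=\delta_{ij}$, while each of the $d(\Bbb F)-1$ imaginary components is, up to a sign, $e_i\cdot J_r e_j$ for the corresponding complex or quaternionic structure $J_r$ on $\fp$. Hence
\[
\sum_{i,j}|\bar v_i^t v_j|^2=n+\sum_{i\neq j}\sum_r(e_i\cdot J_r e_j)^2,
\]
and matching with the curvature formula in the proposition yields $\sum_r(e_i\cdot J_r e_j)^2=(\tilde K(e_i,e_j)-1)/3$ for $i\neq j$, uniformly across $\Bbb F\in\{\Bbb R,\Bbb C,\Bbb Q\}$ (the $r$-sum being empty and $\tilde K\equiv 1$ in the real case). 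Collecting the constants produces exactly the claimed identity.

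The main obstacle I anticipate is bookkeeping the isometric normalization consistently across the three fields: the first standard embedding $\rho$ is isometric only after a field-dependent rescaling of the trace inner product on $H_{m+1}(\Bbb F)$, and the coefficients $\tfrac{4(n+2)}{3n}$ and $\tfrac{2}{3n^2}$ in the statement are sensitive to this choice matching the Fubini--Study normalization (sectional curvature in $[1,4]$). The real case $\Bbb F=\Bbb R$, where $\tilde K\equiv 1$ so $\sum_{i\neq j}\tilde K=n(n-1)$ and the formula collapses to $|\textbf{H}'|^2-|\textbf{H}|^2=2(n+1)/n$, serves as a clean sanity check on the numerical constants.
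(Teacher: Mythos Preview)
The paper does not prove this proposition at all: it is quoted verbatim as Lemma~6.3 of Chen's book \cite{Chen84} and used as a black box, with the text immediately passing to the consequence \eqref{meanineq}. So there is no ``paper's own proof'' to compare against.

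Your plan is, in fact, essentially the standard proof of the cited lemma. The composition rule for second fundamental forms, the orthogonal splitting of $n\textbf{H}'$ into the $d\rho(n\textbf{H})$ piece and the $\sum_i h_\rho(v_i,v_i)$ piece, and the reduction of the latter's squared norm to $\sum_{i,j}|\bar v_i^{\,t}v_j|^2$ are exactly how Chen's argument runs. Your identification of the imaginary components of $\bar v_i^{\,t}v_j$ with the inner products $e_i\cdot J_r e_j$, and hence with $(\tilde K(e_i,e_j)-1)/3$, is also the correct mechanism. The only point where your write-up is a genuine plan rather than a proof is the normalization bookkeeping you flag yourself: the first standard embedding is isometric for the inner product $\langle A,B\rangle=\tfrac12\,\mathrm{tr}(AB)$ on $H_{m+1}(\Bbb F)$ (uniformly in $\Bbb F$), and with that convention the constants $\tfrac{4(n+2)}{3n}$ and $\tfrac{2}{3n^2}$ drop out directly. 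Your $\Bbb R$-case check $|\textbf{H}'|^2-|\textbf{H}|^2=2(n+1)/n$ is the right way to pin this down.
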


In fact, one can infer that
$$
|\textbf{H}^\prime|^2=\left \{
\begin{array}{ll}
|\textbf{H}|^2+\frac{2(n+1)}{n}, &\mbox{for}\ \Bbb {R}P^m\\
|\textbf{H}|^2+\frac{2(n+1)}{n}+\frac2{n^2}\sum_{i,j=1}^n(e_i\cdot Je_j)^2\leq
|\textbf{H}|^2+\frac{2(n+2)}{n}, &\mbox{for}\ \Bbb {C}P^m\\
|\textbf{H}|^2+\frac{2(n+1)}{n}+\frac2{n^2}\sum_{i,j=1}^n\sum_{r=1}^3(e_i\cdot
J_re_j)^2\leq |\textbf{H}|^2+\frac{2(n+4)}{n}, &\mbox{for}\ \Bbb {Q}P^m
\end{array}
\right.
$$
i.e.
\begin{equation}\label{meanineq}
|\textbf{H}^\prime|^2\leq |\textbf{H}|^2+\frac {2(n+d(\Bbb F))}{n}
\end{equation}
where the equality holds if and only if $M$ is a complex submanifold
of $\Bbb {C}P^m$, for the case $\Bbb {C}P^m$; the equality holds if
and only if $n\equiv 0 \ (\mbox{mod 4})$ and $M$ is a quarternionic
submanifold of $\Bbb {Q}P^m$, for the case $\Bbb {Q}P^m$.

\section{Compact submanifolds in Euclidean space}

Assume that $M^4$ is a 4-dimensional compact  Riemannian manifold and isometrically
immerse into Euclidean space $\Bbb R^N$. Let $X=(x^1,\dots,x^N):M^4\longrightarrow \Bbb
R^N$ be the position vector of $M^4$ in $\Bbb R^N$. The eigenvalue problem for Paneitz
operator refer to
\begin{equation}\label{eigenPaneit}
Pf=\ld f,\qquad f\in C^\infty(M^4).
\end{equation}
For the first eigenvalue of Paneitz operator, we have
\begin{thm}\label{Euclidean}
Assume that $M^4$ is a 4-dimensional compact  Riemannian manifold
isometrically immersed in Euclidean space $\Bbb R^N$. Let $\ld_1$ be
the first nonzero eigenvalue of Paneitz operator, then
\begin{equation}\label{Reilly1}
\ld_1 \leq \frac{\Big(16\int_{M^4}|\textbf{H}|^2dv+\frac23\int_M
Rdv\Big)\int_{M^4}|\textbf{H}|^2dv}{\displaystyle{Vol}^2(M^4)},
\end{equation}
where $\text{Vol}(M^4)$ is the volume of manifold $M^4$. Moreover, if $N=5$, then the
equality holds in (\ref{Reilly1}) if and only if $M^4$ is a hypershere in $\Bbb R^5$; if
$N\geq 6$, and the equality holds in (\ref{Reilly1}), then $M^4$ is a minimal submanifold
of some hypersphere in $\Bbb R^N$.
\end{thm}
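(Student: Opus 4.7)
The plan is to apply the variational (Rayleigh quotient) characterization of $\ld_1$ to the centered coordinate functions of the immersion and then invoke Reilly's integration-by-parts/Cauchy--Schwarz trick to bound the denominator. After translating so that $\intm x^p\,dv=0$ for every $p$, each $x^p$ is a legitimate test function for $\ld_1$, so
\begin{equation*}
\ld_1\,\sum_{p=1}^N\intm (x^p)^2\,dv \;\le\; \sum_{p=1}^N\intm x^p\,Px^p\,dv.
\end{equation*}
Integrating by parts, the right-hand side becomes $\sum_p\intm\!\big[(\Delta x^p)^2+\tfrac{2}{3}R|\n x^p|^2-2\,\text{Ric}(\n x^p,\n x^p)\big]dv$.

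The next step is to collapse the $p$-sums using the standard submanifold identities recorded in Section 2. From \eqref{dr} one has $\sum_p|\n x^p|^2=4$, and from \eqref{meancur} one gets $\sum_p(\Delta x^p)^2=|\Delta X|^2=16|\textbf{H}|^2$. For the Ricci term I would use that in any local orthonormal tangent frame $\{e_i\}$ the components $e_i(x^p)$ form an isometry between the normal-bundle decomposition in $\mathbb{R}^N$ and $T_qM$, so $\sum_p e_i(x^p)e_j(x^p)=\delta_{ij}$; hence $\sum_p\text{Ric}(\n x^p,\n x^p)=R_{ij}\delta_{ij}=R$. Combining these three identities yields
\begin{equation*}
\sum_{p=1}^N\intm x^p\,Px^p\,dv \;=\; 16\intm|\textbf{H}|^2dv \;+\; \tfrac{8}{3}\intm R\,dv \;-\; 2\intm R\,dv \;=\; 16\intm|\textbf{H}|^2dv+\tfrac{2}{3}\intm R\,dv,
\end{equation*}
which is precisely the bracketed factor in \eqref{Reilly1}.

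To finish, I would bound the denominator $\sum_p\intm(x^p)^2\,dv=\intm|X|^2\,dv$ from below by the Reilly trick: integrating $\text{div}(x^p\n x^p)$ and using \eqref{meancur} gives $\intm X\cdot\textbf{H}\,dv=-\textm$, and then Cauchy--Schwarz yields $\textm^{\,2}\le\intm|X|^2dv\cdot\intm|\textbf{H}|^2dv$. Plugging this into the Rayleigh estimate produces \eqref{Reilly1}.

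The main obstacle is the equality discussion. Equality forces (i) each $x^p$ to be a $\ld_1$-eigenfunction of $P$, and (ii) equality in the Cauchy--Schwarz step, so $X=c\,\textbf{H}$ pointwise for a constant $c$. Since $\textbf{H}$ is normal to $M^4$, condition (ii) implies $|X|$ is constant, i.e.\ $M^4$ lies in a round sphere $S^{N-1}(r)\subset\mathbb{R}^N$. When $N=5$, a compact connected hypersurface of $S^4(r)$ must be all of $S^4(r)$. When $N\ge 6$, I would decompose $\textbf{H}=\textbf{H}_S-\tfrac{1}{r^2}X$, where $\textbf{H}_S$ is the mean curvature vector of $M^4$ in $S^{N-1}(r)$; comparing the tangential-to-sphere parts of the proportionality $X=c\,\textbf{H}$ forces $\textbf{H}_S\equiv 0$, so $M^4$ is minimal in a hypersphere. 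Verifying that condition (i) is compatible with (ii) (and recovering the sharpness by checking the round sphere actually realizes \eqref{Reilly1}) should be a direct computation using $\Delta X=-\tfrac{4}{r^2}X$ and $R=12/r^2$ on $S^4(r)$.
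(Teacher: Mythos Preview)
Your proposal is correct and follows essentially the same route as the paper: center the immersion, use the coordinate functions as test functions in the variational characterization of $\ld_1$, collapse the $p$-sums via $\sum_p|\n x^p|^2=4$, $\sum_p(\Delta x^p)^2=16|\textbf{H}|^2$, and $\sum_p x^p_i x^p_j=\delta_{ij}$ to obtain $16\intm|\textbf{H}|^2dv+\tfrac23\intm R\,dv$, and then combine Cauchy--Schwarz with the Minkowski identity $-\intm\langle X,\textbf{H}\rangle dv=\textm$ to control $\intm|X|^2dv$. The equality analysis also matches the paper's (the paper writes $\textbf{H}=cX$ rather than $X=c\textbf{H}$ and omits the $N=5$ details, but the substance is the same); one small wording fix: for $N=5$ you mean that a compact $4$-dimensional submanifold of the $4$-sphere $S^4(r)$ is all of $S^4(r)$, not a ``hypersurface of $S^4(r)$''.
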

\begin{proof}Since all the quantities appearing in (\ref{Reilly1}) is independent of the choice of origin,
we may, without loss of generality,  assume that the center of gravity of $X$ is located at the origin; that is,
\begin{equation*}
    \int_{M^4}Xdv=0.
\end{equation*}
By min-max principle, using the coordinates functions $x^p \ (1\leq
p\leq N)$ as test functions, we obtain
\begin{equation} \label{V}
\ld_1\int_{M^4}|X|^2dv\leq \int_{M^4}\langle X,PX\rangle dv
\end{equation}
where $\langle,\rangle$ is Euclidean inner product.

Now we will calculate the term $\int_{M^4}\langle X,PX\rangle dv$ in
the following,
\begin{equation*}
\begin{aligned}
\int_{M^4}\langle X,PX\rangle dv&=\int_{M^4}\langle X,\Delta^2X\rangle
dv+\intm\sum_{p=1}^N \Big(\big(2R_{ij}-\frac23
R\delta_{ij}\big)x_i^p\Big)_{,j}x^p dv\\
&=\intm|\Delta X|^2dv-\intm \sum_{p=1}^N \Big(2R_{ij}-\frac23
R\delta_{ij}\Big)\langle x_i^p,x^p_j\rangle dv\\
&=\intm|\Delta X|^2dv-\intm \Big(2R_{ij}-\frac23 R\delta_{ij}\Big)\delta_{ij} dv\\
&=16\intm|\textbf{H}|^2dv+\frac23\intm R dv.
\end{aligned}
\end{equation*}
Therefore, we obtain
\begin{equation}\label{E1}
\ld_1\intm|X|^2dv\leq16\intm|\textbf{H}|^2dv+\frac23\intm Rdv.
\end{equation}
Multiply $\intm|\textbf{H}|^2$ in (\ref{E1}) both sides, then
\begin{equation}\label{E2}
\ld_1\intm|X|^2dv\intm|\textbf{H}|^2dv\leq\left(16\intm|\textbf{H}|^2dv+\frac23\intm
Rdv\right)\intm|H|^2dv.
\end{equation}
By Cauchy-Schwartz inequality, we obtain
\begin{equation}\label{E3}
    \left(\intm\langle X,\textbf{H}\rangle dv\right)^2\leq \intm|X|^2dv\intm|\textbf{H}|^2dv.
\end{equation}
Recall the Minkowski formula (see \cite{Reilly77})
\begin{equation}\label{E4}
    -\intm\langle X,\textbf{H}\rangle dv=\text{Vol}(M^4).
\end{equation}
From (\ref{E2}),(\ref{E3}) and (\ref{E4}), we get the inequality
(\ref{Reilly1}).

If $N\geq 6$ and the equality holds in (\ref{Reilly1}), all inequalities become qualities
from (\ref{V}) to (\ref{E4}). The equality holds in (\ref{E3}), then
\begin{equation}\label{mp}
    \textbf{H}=cX,\qquad  c\text{\ is\ constant}.
\end{equation}
The Minkowski formula (\ref{E4}) implies the constant $c\neq 0$.
Since
\begin{equation*}
d\langle X,X\rangle=2\langle d X,X\rangle=0,
\end{equation*}
it follows that
\begin{equation*}
|X|=r=constant.
\end{equation*}
Hence  $M^4$ is a   immersed submanifold into  a hypersphere $S^{N-1}(r)$ in  $\Bbb R^N$.
Assume that $\bar {h}(u,v)$ is the second fundamental form of $\Bbb S^{N-1}(r)$ in $\Bbb
R^N$, where $u, v$ are the tangent vectors of $M^4$. We have the following the relation
(See (3.5) of P.79 in \cite{Chen73})
$$
\textbf{H}=\tilde {\textbf{H}}+\frac14 \sum_{i=1}^4\bar h(e_i,e_i)
$$
where $\{e_i\}_{i=1}^4$ is the orthonormal basis in $\Gamma(TM^4)$,  $\tilde
{\textbf{H}}$ is the mean curvature vector of  $M^4$ in $S^{N-1}(r)$. Since  $\bar
h(e_i,e_j)=-\frac {1}{r^2}\delta_{ij} X$, we have
\begin{equation}\label{hh}
\textbf{H}=\tilde {\textbf{H}}-\frac {1}{r^2}X.
\end{equation}
From (\ref{mp}), we have $\tilde {\textbf{H}}=0$, i.e., $M^4$ is a  minimally immersed
submanifold in a hypersphere $S^{N-1}(r)$.

For $N=5$, that is, $M^4$ is a hypersurface in $\Bbb R^5$, the proof follows in much the
same way, we omit it here.
\end{proof}

\begin{cor}
Assume that $M^4$ is a compact 4-dimensional Riemannian manifold
isometrically immersed in Euclidean space $\Bbb R^N$. Let $\ld_1$ be
the first nonzero eigenvalue of Paneitz operator, then
\begin{equation}\label{Reilly2}
\ld_1\leq 24 \left(\frac{\intm|\textbf{H}|^2dv}{\text{Vol}(M^4)}\right)^2.
\end{equation}
In particular, equality holds in (\ref{Reilly2}) if and only if
$M^4$ is a round sphere.
\end{cor}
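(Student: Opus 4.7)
The plan is to derive the corollary from Theorem \ref{Euclidean} by controlling the scalar curvature term $\int_{M^4} R\,dv$ purely in terms of $\int_{M^4}|\textbf{H}|^2\,dv$.

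First, I would invoke the Gauss equation (\ref{Gauss}), which for $n=4$ reads $R = 16|\textbf{H}|^2 - |\textbf{h}|^2$. Next I would recall the standard algebraic inequality $|\textbf{h}|^2 \geq n|\textbf{H}|^2$, which for $n=4$ becomes $|\textbf{h}|^2 \geq 4|\textbf{H}|^2$. This is the Cauchy--Schwarz inequality applied twice: $|n\textbf{H}|^2 = |\sum_i \textbf{h}_{ii}|^2 \leq n\sum_i |\textbf{h}_{ii}|^2 \leq n|\textbf{h}|^2$. Combining these two facts gives the pointwise bound
\begin{equation*}
R \leq 16|\textbf{H}|^2 - 4|\textbf{H}|^2 = 12|\textbf{H}|^2,
\end{equation*}
and hence after integration $\int_{M^4} R\,dv \leq 12 \int_{M^4} |\textbf{H}|^2\,dv$.

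Substituting this bound into the estimate (\ref{Reilly1}) of Theorem \ref{Euclidean} gives
\begin{equation*}
\ld_1 \leq \frac{\bigl(16\int_{M^4}|\textbf{H}|^2dv + \tfrac{2}{3}\cdot 12\int_{M^4}|\textbf{H}|^2dv\bigr)\int_{M^4}|\textbf{H}|^2dv}{\text{Vol}^2(M^4)} = 24\left(\frac{\int_{M^4}|\textbf{H}|^2dv}{\text{Vol}(M^4)}\right)^{2},
\end{equation*}
which is (\ref{Reilly2}).

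For the equality case, I would argue that equality forces two conditions simultaneously. First, equality in (\ref{Reilly1}) must hold, so by Theorem \ref{Euclidean} $M^4$ is a minimal submanifold of some hypersphere $\Bbb S^{N-1}(r) \subset \Bbb R^N$. Second, equality $|\textbf{h}|^2 = 4|\textbf{H}|^2$ must hold pointwise, which (by tracing through the two Cauchy--Schwarz steps) forces $\textbf{h}_{ij} = \delta_{ij}\textbf{H}$, i.e.\ $M^4$ is totally umbilical in $\Bbb R^N$. Using the decomposition (\ref{hh}) of the second fundamental form of $M^4$ in $\Bbb R^N$ as the sum of its second fundamental form $\tilde{\textbf{h}}$ in $\Bbb S^{N-1}(r)$ plus the umbilical contribution $-\frac{1}{r^2}\delta_{ij}X$ from the sphere, total umbilicity in $\Bbb R^N$ forces $\tilde{\textbf{h}}$ to be pure trace in $\Bbb S^{N-1}(r)$; combined with the minimality $\tilde{\textbf{H}} = 0$, this yields $\tilde{\textbf{h}} \equiv 0$, so $M^4$ is totally geodesic in $\Bbb S^{N-1}(r)$, hence a round $4$-sphere. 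The converse (that a round sphere achieves equality) is an immediate calculation. The only subtle point is verifying that the equality-forcing step $|\textbf{h}|^2 = n|\textbf{H}|^2$ really does give total umbilicity and not just that all diagonal entries are equal, but this is handled by the chain of the two Cauchy--Schwarz inequalities.
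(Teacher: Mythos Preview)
Your proof is correct and follows essentially the same approach as the paper: use the Gauss equation together with the Cauchy--Schwarz inequality $|\textbf{h}|^2 \geq 4|\textbf{H}|^2$ to get $R \leq 12|\textbf{H}|^2$, then substitute into (\ref{Reilly1}). Your treatment of the equality case is somewhat more detailed than the paper's, which simply notes that $|\textbf{h}|^2 = 4|\textbf{H}|^2$ forces $M^4$ to be a round sphere; you instead combine total umbilicity with the minimal-in-a-hypersphere conclusion from Theorem~\ref{Euclidean}, but this is the same argument unpacked.
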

\begin{proof}
From Gauss equation, we get
\begin{equation*}
    R=16|\textbf{H}|^2-|\textbf{h}|^2.
\end{equation*}
By Cauchy-Schwartz inequality $|\textbf{h}|^2\geq 4|\textbf{H}|^2$ , we obtain
\begin{equation}\label{E5}
R\leq 12|\textbf{H}|^2.
\end{equation}
Combining  (\ref{Reilly1}) with (\ref{E5}), we deduce the estimate
(\ref{Reilly2}).

If the equality holds in (\ref{Reilly2}), then $|\textbf{h}|^2= 4|\textbf{H}|^2$, so we
have $M^4$ is a round sphere.
\end{proof}

\begin{cor}
Assume that $M^4$ is a compact 4-dimensional Riemannian manifold
isometrically immersed in Euclidean space $\Bbb R^N$. Let $\ld_1$ be
the first nonzero eigenvalue of Paneitz operator, then
\begin{equation}\label{Reilly3}
\ld_1\leq24\frac{\intm|\textbf{H}|^4dv}{\text{Vol}(M^4)}.
\end{equation}
The equality holds if and only if $M^4$ is a round sphere.
\end{cor}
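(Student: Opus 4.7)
The plan is to deduce this corollary directly from the preceding one by applying the Cauchy--Schwarz inequality to the mean curvature integral. Specifically, I would start from the bound
\begin{equation*}
\ld_1 \leq 24\left(\frac{\intm|\textbf{H}|^2 dv}{\text{Vol}(M^4)}\right)^2
\end{equation*}
already established in the previous corollary, and then bound $\bigl(\intm|\textbf{H}|^2 dv\bigr)^2$ from above using Cauchy--Schwarz applied to the functions $|\textbf{H}|^2$ and $1$ on $M^4$.

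First, I would write $\intm|\textbf{H}|^2\, dv = \intm 1\cdot|\textbf{H}|^2\, dv$ and apply Cauchy--Schwarz to obtain
\begin{equation*}
\left(\intm|\textbf{H}|^2 dv\right)^2 \leq \text{Vol}(M^4)\cdot\intm|\textbf{H}|^4 dv.
\end{equation*}
Dividing by $\text{Vol}(M^4)^2$ and substituting into the previous corollary's estimate yields exactly (\ref{Reilly3}).

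For the equality case, I would note that equality in (\ref{Reilly3}) forces both equality in the previous corollary and equality in the Cauchy--Schwarz step. The former, as shown above, already forces $M^4$ to be a round sphere; the latter requires $|\textbf{H}|^2$ to be constant, which is automatic on a round sphere. Conversely, on a round sphere in $\mathbb{R}^5$ both inequalities become equalities, so (\ref{Reilly3}) is sharp precisely for round spheres.

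The only potentially subtle point is verifying that the equality case of Cauchy--Schwarz does not add a restrictive condition beyond what the previous corollary already imposes; since a round sphere has constant $|\textbf{H}|$, this is immediate, so no real obstacle arises and the argument is short.
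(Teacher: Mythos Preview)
Your proposal is correct and follows essentially the same route as the paper: apply Cauchy--Schwarz to $\intm 1\cdot|\textbf{H}|^2\,dv$ to bound $\bigl(\intm|\textbf{H}|^2\,dv\bigr)^2$ by $\text{Vol}(M^4)\intm|\textbf{H}|^4\,dv$, then invoke the previous corollary. Your treatment of the equality case is in fact more explicit than the paper's, which simply states the conclusion without elaboration.
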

\begin{proof}
From Cauchy-Schwartz inequality, we get
\begin{equation*}
    \left(\intm|\textbf{H}|^2dv\right)^2=\left(\intm|\textbf{H}|^2\cdot1dv\right)^2\leq\text{Vol}(M^4)\intm|\textbf{H}|^4dv.
\end{equation*}
Then by inequality (\ref{Reilly2}), (\ref{Reilly3}) holds.
\end{proof}

\begin{cor}\label {corReilly}
Assume that $M^4$ is a compact 4-dimensional Riemannian manifold isometrically immersed
in Euclidean space $\Bbb R^N$. Let $\ld_1$ be the first nonzero eigenvalue of Paneitz
operator, then
\begin{enumerate}
 \item If $N\geq 6$, then
 \begin{equation}\label{Reilly4}
 \ld_1 \left(\intm H_2dv\right)^2\leq \left(16\intm |\textbf{H}|^2dv+\frac23\intm
 Rdv\right)\intm|\textbf{H}_3|^2dv,
 \end{equation}
If  equality holds in (\ref{Reilly4}) and $\textbf{H}_3$ does not vanish identically,
 then $M^4$ is a immersed submanifold with $\textbf{H}_3=cX$ in some hypersphere in $\Bbb
 R^N$ for some constant $c$.
\item If $N=5$, then
\begin{equation}\label{Reilly5}
\ld_1 \left(\intm H_{i-1}dv\right)^2\leq \left(16\intm
|H|^2dv+\frac23\intm
 Rdv\right)\intm|H_i|^2dv,\quad i=1,2,3,4.
\end{equation}
Equality holds in (\ref{Reilly5}) if and only if $M^4$ is a round sphere in $\Bbb R^5$.
\end{enumerate}

\end{cor}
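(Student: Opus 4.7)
The plan is to mimic the proof of Theorem~\ref{Euclidean} step for step, replacing the Cauchy--Schwartz application~(\ref{E3}) and the classical Minkowski formula~(\ref{E4}) by their higher-order analogues. The central inequality~(\ref{E1}),
\begin{equation*}
\ld_1\intm|X|^2dv\leq 16\intm|\textbf{H}|^2dv+\frac23\intm R\,dv,
\end{equation*}
is established by using the coordinate functions as test functions and involves no Minkowski-type identity; it is the only spectral input needed, and remains valid in all codimensions. Everything after this point is purely extrinsic.

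For part~(1), I would invoke the generalized Hsiung--Minkowski identity for $r=2$,
\begin{equation*}
\intm H_2\,dv+\intm\langle X,\textbf{H}_3\rangle\,dv=0,
\end{equation*}
which follows by integrating $\text{div}\bigl(T_2(X^\top)\bigr)$ over $M^4$ using the definitions (\ref{Trij}), (\ref{Talpha}) and the Codazzi equation (cf.\ \cite{Grosjean02,Reilly77}). Squaring and applying Cauchy--Schwartz gives
\begin{equation*}
\left(\intm H_2\,dv\right)^2\leq\intm|X|^2dv\cdot\intm|\textbf{H}_3|^2dv,
\end{equation*}
and multiplying~(\ref{E1}) by $\intm|\textbf{H}_3|^2dv$ yields~(\ref{Reilly4}). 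For part~(2), since $M^4\hookrightarrow\mathbb{R}^5$ is a hypersurface, $\textbf{H}_i=H_i\nu$ for a unit normal $\nu$, and the classical Hsiung formula $\intm H_{i-1}\,dv=-\intm H_i\langle X,\nu\rangle\,dv=-\intm\langle X,\textbf{H}_i\rangle\,dv$ is valid for every $i\in\{1,2,3,4\}$; the same Cauchy--Schwartz argument then produces~(\ref{Reilly5}).

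For the equality discussion, I would trace through the chain. Equality in Cauchy--Schwartz forces $\textbf{H}_3=cX$ (resp.\ $\textbf{H}_i=cX$) pointwise with $c\in\mathbb{R}$; when $\textbf{H}_3$ does not vanish identically the higher-order Minkowski identity forces $c\neq 0$. Because $\textbf{H}_3$ (resp.\ $\textbf{H}_i$) lies in the normal bundle, so must $X$, hence $\langle dX,X\rangle=0$ and $|X|$ is constant exactly as in the proof of Theorem~\ref{Euclidean}; this places $M^4$ inside a hypersphere of $\mathbb{R}^N$, giving the equality statement of part~(1). For part~(2), the resulting inclusion $M^4\subset S^4(r)\subset\mathbb{R}^5$ together with a dimension count identifies $M^4$ with the whole round sphere $S^4(r)$, and the converse is a direct verification. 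The main subtlety, which I expect to be the only nonroutine point, is pinning down the exact normalization of the generalized Minkowski identity relative to the conventions~(\ref{Tr-1})--(\ref{sr+1}) of the paper; once this index computation is done, the rest of the corollary follows immediately.
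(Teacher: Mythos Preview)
Your approach is essentially identical to the paper's: both start from~(\ref{E1}), multiply through by $\intm|\textbf{H}_3|^2\,dv$ (resp.\ $\intm H_i^2\,dv$), apply Cauchy--Schwartz together with the higher-order Minkowski formula from \cite{Reilly77}, and then read off the equality case via $\textbf{H}_3=cX$ and $|X|=\text{const}$. One small slip: the conclusion $c\neq 0$ follows directly from $\textbf{H}_3=cX$ with $\textbf{H}_3\not\equiv 0$, not from the Minkowski identity as you wrote.
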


\begin{proof} (1) \qquad Multiplying  $\intm|\textbf{H}_3|^2$ both sides in (\ref{E1}) , we
get
\begin{equation}\label{E6}
\ld_1\intm |X|^2dv\intm|\textbf{H}_3|^2dv\leq \left(16\intm |\textbf{H}|^2dv+\frac23\intm
 Rdv\right)\intm|\textbf{H}_3|^2dv.
\end{equation}
From Cauchy-Schwartz inequality
\begin{equation*}
\intm\langle X,\textbf{H}_3\rangle^2dv\leq \intm |X|^2dv\intm|\textbf{H}_3|^2dv
\end{equation*}
and Minkowski formula in \cite{Reilly77}
\begin{equation*}
-\intm\langle X,\textbf{H}_3\rangle dv=\intm H_2dv,
\end{equation*}
the inequality (\ref{Reilly4}) holds.

 From the basic facts for Cauchy-Schwarz inequality,
this implies that $\textbf{H}_3=c X$ for some constant $c$.
 By hypothesis, $\textbf{H}_3$ does not vanish identically, we have $c\neq 0$.
 Since $d|X|^2=2\langle X, dX\rangle=0$, it follows that $|X|=const.$,
 so $X$ map $M^4$ in to  a hypersphere of $\Bbb R^{N}$. \\

(2) \qquad Multiplying $\int H_i^2$ both sides in (\ref{E1}), then we have
\begin{equation}\label{E7}
\ld_1\intm |X|^2H_i^2dv\leq \left(16\intm |\textbf{H}|^2dv+\frac23\intm
 Rdv\right)\intm H_i^2dv.
\end{equation}
From Cauchy-Schwartz inequality
\begin{equation*}
\intm\langle X,H_i e_5\rangle^2dv\leq \intm |X|^2H_i^2dv
\end{equation*}
and Minkowski formula in \cite{Reilly77}
\begin{equation*}
-\intm\langle X,H_i e_5\rangle dv=\intm H_{i-1} dv,
\end{equation*}
where $e_5$ is the unit normal vector field of $M^4$, the inequality (\ref{Reilly4})
holds. If the equality holds in (\ref{Reilly5}),  we have $H_ie_5=cX$ from the
Cauchy-Schwarz inequality for some constant $c$. Since
\begin{equation*}
d\langle X,X\rangle=2\langle d X,X\rangle=0,
\end{equation*}
it follows that $|X|=r=const.$. Therefore, we have $M^4$ is a sphere in $\Bbb R^5$.
\end{proof}

\section{submanifolds in spheres and projective spaces}
Since the unit sphere and projective spaces admit canonical
embedding into the Euclidean space, we will obtain the corresponding
estimates for the first eigenvalue of  Paneitz operator.

\begin{thm}\label{sphere}
Assume that $M^4$ is a 4-dimension compact Riemannian manifold and
isometrically immerses into the unit sphere $\Bbb S^{N-1}\subset
\Bbb R^{N-1}$. Let $\ld_1$ be the first nonzero eigenvalue of
Paneitz operator, then
\begin{equation}\label{Reilly6}
\begin{aligned}
 \ld_1 &\leq 16+\frac{32\intm|\textbf{H}|^2dv+\frac23\intm
R dv}{\text{Vol}(M^4)}\\
&\qquad+\frac{1}{\text{Vol}^2(M^4)}\left(16\intm|\textbf{H}|^2dv+\frac23\intm R
dv\right)\intm|\textbf{H}|^2dv.
\end{aligned}
\end{equation}
Moreover, if the equality holds in (\ref{Reilly6}), then $M^4$ is a minimal submanifold
in unit sphere $\Bbb S^{N-1}$.

\end{thm}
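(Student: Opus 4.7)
The key observation is that the Paneitz operator is intrinsic, so the first eigenvalue $\lambda_1$ of $M^4$ does not depend on the ambient space in which $M^4$ is isometrically embedded. The plan is therefore to apply Theorem \ref{Euclidean} directly to the composed isometric immersion $M^4 \hookrightarrow \mathbb{S}^{N-1} \hookrightarrow \mathbb{R}^N$, and then re-express the right-hand side of (\ref{Reilly1}) in terms of the extrinsic data of $M^4$ inside the unit sphere.

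First I would compute the mean curvature vector $\textbf{H}^\prime$ of the composed immersion in $\mathbb{R}^N$. Applying the relation (\ref{hh}) to the hypersphere $\mathbb{S}^{N-1}$ of radius $r=1$, one has
\begin{equation*}
\textbf{H}^\prime = \textbf{H} - X,
\end{equation*}
where $\textbf{H}$ denotes the mean curvature of $M^4$ inside $\mathbb{S}^{N-1}$ and $X$ the position vector. Since $\textbf{H}$ lies in the normal bundle of $M^4$ in $\mathbb{S}^{N-1}$, it is tangent to $\mathbb{S}^{N-1}$ and therefore orthogonal to $X$. Consequently
\begin{equation*}
|\textbf{H}^\prime|^2 = |\textbf{H}|^2 + 1,
\qquad
\int_{M^4}|\textbf{H}^\prime|^2\,dv = \int_{M^4}|\textbf{H}|^2\,dv + \text{Vol}(M^4).
\end{equation*}
The scalar curvature $R$ is intrinsic and is unchanged.

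Substituting these identities into the bound (\ref{Reilly1}) applied to the composed immersion, the numerator becomes
\begin{equation*}
\Big(16\int_{M^4}|\textbf{H}|^2\,dv + 16\,\text{Vol}(M^4) + \tfrac{2}{3}\int_{M^4} R\,dv\Big)\Big(\int_{M^4}|\textbf{H}|^2\,dv + \text{Vol}(M^4)\Big).
\end{equation*}
Dividing by $\text{Vol}^2(M^4)$ and expanding into its four natural summands produces exactly the three terms on the right of (\ref{Reilly6}): the constant $16$, the single-integral term $\bigl(32\int|\textbf{H}|^2 + \tfrac{2}{3}\int R\bigr)/\text{Vol}(M^4)$, and the quadratic term. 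This reduction is the only nontrivial step, and I do not anticipate an obstacle; the rest is bookkeeping.

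For the equality case, if equality holds in (\ref{Reilly6}) then all the inequalities in the proof of Theorem \ref{Euclidean} applied to the composed immersion must be equalities as well. The Cauchy--Schwarz step then forces $\textbf{H}^\prime = cX$ for some constant $c$, and combined with $\textbf{H}^\prime = \textbf{H} - X$ this gives $\textbf{H} = (1+c)X$. Since $\textbf{H}$ is tangent to $\mathbb{S}^{N-1}$ while $X$ is normal to it, both sides must vanish; thus $c = -1$ and $\textbf{H} \equiv 0$, so $M^4$ is a minimal submanifold of $\mathbb{S}^{N-1}$, as required.
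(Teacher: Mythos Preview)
Your proposal is correct and follows essentially the same route as the paper: apply Theorem~\ref{Euclidean} to the composed immersion $M^4\hookrightarrow\mathbb{S}^{N-1}\hookrightarrow\mathbb{R}^N$, use $|\textbf{H}'|^2=|\textbf{H}|^2+1$, expand, and for the equality case combine $\textbf{H}'=cX$ with $\textbf{H}'=\textbf{H}-X$. Your equality argument is in fact slightly more explicit than the paper's, spelling out the orthogonality of $\textbf{H}$ and $X$ that forces $\textbf{H}=0$.
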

\begin{proof}
Since the unit sphere can be canonically imbedded into Euclidean
space, we have the following diagram
\begin{equation}\label{diagram1}
    M^4\xymatrix{
  \ar[dr]_{\textbf{H}'} \ar[r]^{\textbf{H}}
                & \Bbb S^{N-1} \ar[d]^{i}  \\
                & \Bbb R^N             }
\end{equation}
where $H'$ is the mean curvature vector of $M^4$ in Euclidean space $\Bbb
R^N$.

 By the theorem \ref{Euclidean} and $|\textbf{H}'|^2=|\textbf{H}|^2+1$, we obtain
\begin{equation*}
\begin{aligned}
\ld_1 \leq &\frac{\Big(16\intm(|\textbf{H}|^2+1)dv+\frac23\intm
R dv\Big)\Big(\intm(|\textbf{H}|^2+1)dv\Big)}{\text{Vol}^2(M^4)}\\
=&\frac{\left(16\text{Vol}(M^4)+16\intm|\textbf{H}|^2dv+\frac23\intm
Rdv\right)\left(\intm|\textbf{H}|^2dv+\text{Vol}(M^4)\right)}{\text{Vol}^2(M^4)}\\
=&16\left(1+\frac{\intm|\textbf{H}|^2dv}{\text{Vol}(M^4)}\right)+\frac{16\intm|\textbf{H}|^2dv+\frac23\intm
Rdv}{\text{Vol}(M^4)}\\
& \ \quad+\frac{\left(16\intm |\textbf{H}|^2dv+\frac23\intm
Rdv\right)\intm|\textbf{H}|^2dv}{\text{Vol}^2(M^4)}.
\end{aligned}
\end{equation*}
Therefore we get (\ref{Reilly6}).

It is well known that the mean curvatures $\textbf{H}'$ and $\textbf{H}$ obey
\begin{equation}\label{mmcurvature}
\textbf{H}'=\textbf{H}-X,
\end{equation}
where $X$ is the position vector field of $M^4$ in unit sphere $\Bbb S^{N-1}$. From
Theorem \ref{Euclidean}, if the equality holds in (\ref{Reilly6}), then
\begin{equation*}
    \textbf{H}'=cX, \qquad \text{ for some constant c.}
\end{equation*}
From (\ref{mmcurvature}), if the equality holds in (\ref{Reilly6}), then $M^4$ is a
minimal submanifold in $\Bbb S^{N-1}$.
\end{proof}

From (\ref{Reilly2}), (\ref{Reilly3}) and
\begin{equation*}
\begin{aligned}
  R&=12+16|\textbf{H}|^2-|\textbf{h}|^2\\
  &\leq12(|\textbf{H}|^2+1),
  \end{aligned}
\end{equation*}
we get from Theorem \ref{sphere}
\begin{cor}
Assume that $M^4$ is a 4-dimension compact Riemannian manifold and
isometrically immerses into the unit sphere $\Bbb S^{N-1}\subset
\Bbb R^{N-1}$. Let $\ld_1$ be the first nonzero eigenvalue of
Paneitz operator, then
\begin{equation}\label{Reilly8}
\ld_1\leq \frac{24}{\text{Vol}^2(M^4)} \left(\intm \Big(|\textbf{H}|^2+1\Big)dv\right)^2
\end{equation}
and
\begin{equation}\label{Reilly9}
\ld_1 \leq \frac{24}{\text{Vol}(M^4)} \intm \left(|\textbf{H}|^2+1\right)^2dv.
\end{equation}
Moreover, the equalities hold in (\ref{Reilly8}) and (\ref{Reilly9}) if and only if $M^4$
is a totally geodesic submanifold in unit sphere $\Bbb S^{N-1}$.
\end{cor}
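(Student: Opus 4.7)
The plan is to derive both inequalities by feeding back into Theorem \ref{sphere} the pointwise upper bound on the scalar curvature that follows from the Gauss equation and the Cauchy--Schwarz inequality. First I will invoke the Gauss equation for $M^4\hookrightarrow \mathbb{S}^{N-1}$: since the ambient sectional curvature is $1$, the identity $R=n(n-1)+n^2|\textbf{H}|^2-|\textbf{h}|^2$ with $n=4$ gives $R=12+16|\textbf{H}|^2-|\textbf{h}|^2$. Combining this with the Cauchy--Schwarz bound $|\textbf{h}|^2\geq 4|\textbf{H}|^2$ yields the key pointwise inequality $R\leq 12(|\textbf{H}|^2+1)$, with equality if and only if $M^4$ is totally umbilical in $\mathbb{S}^{N-1}$.

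Next I will substitute $\frac{2}{3}R\leq 8(|\textbf{H}|^2+1)$ into the three summands on the right-hand side of (\ref{Reilly6}). After expanding and collecting like terms, the crucial algebraic observation is that the resulting expression collapses to the perfect square
\begin{equation*}
\lambda_1\leq 24\left(1+\frac{\int_{M^4}|\textbf{H}|^2\,dv}{\mathrm{Vol}(M^4)}\right)^{2}
=\frac{24}{\mathrm{Vol}^2(M^4)}\left(\int_{M^4}(|\textbf{H}|^2+1)\,dv\right)^{2},
\end{equation*}
which is (\ref{Reilly8}). Inequality (\ref{Reilly9}) then follows by a single application of Cauchy--Schwarz with $f=|\textbf{H}|^2+1$: $\left(\int_{M^4}f\,dv\right)^2\leq \mathrm{Vol}(M^4)\int_{M^4}f^2\,dv$.

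For the equality discussion, equality in (\ref{Reilly8}) forces equality both in Theorem \ref{sphere} and in $|\textbf{h}|^2\geq 4|\textbf{H}|^2$ pointwise. By Theorem \ref{sphere}, the former forces $M^4$ to be a minimal submanifold of $\mathbb{S}^{N-1}$ (so $\textbf{H}\equiv 0$), while the latter is the umbilicity condition $|\textbf{h}|^2=4|\textbf{H}|^2$. Together these give $|\textbf{h}|\equiv 0$, so $M^4$ is totally geodesic. Conversely, on a totally geodesic $\mathbb{S}^4\subset\mathbb{S}^{N-1}$ we have $\textbf{H}=0$, $R=12$, so both sides of (\ref{Reilly8}) and (\ref{Reilly9}) equal $24$ (which is indeed $\lambda_1$ for the Paneitz operator on $\mathbb{S}^4$). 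For (\ref{Reilly9}), equality additionally requires $|\textbf{H}|^2+1$ to be constant in the Cauchy--Schwarz step, which is automatic in the totally geodesic case.

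The only non-routine step is the algebraic collapse in the second paragraph — one must verify that the three terms in Theorem \ref{sphere}, after substituting the bound on $R$, rearrange into a perfect square of the mean-curvature average. Beyond that, the argument is a clean substitution followed by a Cauchy--Schwarz step, and the equality analysis reduces transparently to combining the known equality cases of Theorem \ref{sphere} and of the umbilicity inequality $|\textbf{h}|^2\geq 4|\textbf{H}|^2$.
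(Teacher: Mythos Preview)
Your proof is correct and follows the paper's approach: the paper cites (\ref{Reilly2}), (\ref{Reilly3}), the Gauss-equation bound $R\le 12(|\textbf{H}|^2+1)$, and Theorem~\ref{sphere} as the ingredients. The only cosmetic difference is that the paper's shortest route applies the Euclidean corollaries (\ref{Reilly2}) and (\ref{Reilly3}) directly to the composite immersion $M^4\hookrightarrow\Bbb S^{N-1}\hookrightarrow\Bbb R^N$ with $|\textbf{H}'|^2=|\textbf{H}|^2+1$, which yields (\ref{Reilly8}) and (\ref{Reilly9}) without the perfect-square expansion you carry out via Theorem~\ref{sphere}; the two organizations are equivalent and your equality analysis matches.
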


Since there is a canonical imbedding from $\Bbb FP^m(\Bbb F=\Bbb
R,\Bbb C, \Bbb Q)$ to Euclidean space $H_{m+1}(\Bbb F)$, then for
compact manifold $M^4$ isometrically immersed into the projective
space $\Bbb FP^m$, we have the following diagram
\begin{equation}\label{projectivespace}
 M^4\xymatrix{
  \ar[dr]_{\textbf{H}'} \ar[r]^{\textbf{H}}
                & \Bbb FP^m \ar[d]^{\rho}  \\
                & H_{m+1}{(\Bbb F)}              }
\end{equation}
where $\textbf{H}'$ is the mean curvature vector field in Euclidean space $H_{m+1}(\Bbb
F)$. From the relation (\ref{meanineq}) and Theorem 3.1, it follows
\begin{thm}Assume that $M^4$ is a compact Riemannian manifold
isometrically immersed into the projective space $\Bbb FP^m$, Let
$\ld_1$ be the first nonzero eigenvalue of Paneitz operator, then
\begin{equation}\label{Reilly 10}
\begin{aligned}
\ld_1 &\leq 4\left(4+d(\Bbb F)\right)^2+\frac{1}{\text{Vol}(M^4)}\left(4+d(\Bbb
F)\right)\left(16\intm|\textbf{H}|^2dv+\frac13\intm
Rdv\right)\\&\qquad+\frac{\left(16\intm|\textbf{H}|^2dv+\frac23\intm
Rdv\right)\left(\intm |\textbf{H}|^2dv\right)}{\text{Vol}^2(M^4)}.
\end{aligned}
\end{equation}
\end{thm}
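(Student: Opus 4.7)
The plan is to apply Theorem \ref{Euclidean} to the composition $\rho\circ f: M^4 \to H_{m+1}(\Bbb F)$, treating $M^4$ as a submanifold of the Euclidean space $H_{m+1}(\Bbb F)$, and then use (\ref{meanineq}) to convert everything back to $|\textbf{H}|^2$ (the mean curvature of $f$ in $\Bbb FP^m$). The intrinsic scalar curvature $R$ of $M^4$ does not depend on which ambient we use, so it stays the same in the Euclidean estimate.

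\textbf{Step 1.} Apply Theorem \ref{Euclidean} to the isometric immersion $\rho\circ f: M^4\hookrightarrow H_{m+1}(\Bbb F)$ with mean curvature $\textbf{H}'$. This gives
\begin{equation*}
\ld_1\,\text{Vol}^2(M^4) \leq \left(16\intm|\textbf{H}'|^2dv+\frac23\intm R\,dv\right)\intm|\textbf{H}'|^2dv.
\end{equation*}

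\textbf{Step 2.} Substitute the pointwise bound from (\ref{meanineq}). For $n=4$ this reads $|\textbf{H}'|^2\leq |\textbf{H}|^2+\tfrac{4+d(\Bbb F)}{2}$, hence
\begin{equation*}
\intm|\textbf{H}'|^2dv \leq \intm|\textbf{H}|^2dv + \tfrac{4+d(\Bbb F)}{2}\,\text{Vol}(M^4).
\end{equation*}
Inserting this in both factors on the right-hand side yields an upper bound of the form $(A+8(4+d(\Bbb F))V)(B+\tfrac{4+d(\Bbb F)}{2}V)$, where $V=\text{Vol}(M^4)$, $A=16\intm|\textbf{H}|^2dv+\tfrac23\intm R\,dv$ and $B=\intm|\textbf{H}|^2dv$.

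\textbf{Step 3.} Expand the product and divide by $V^2$. The constant term contributes $16\cdot\tfrac{(4+d(\Bbb F))^2}{4}=4(4+d(\Bbb F))^2$; the two cross terms combine to give exactly $\tfrac{(4+d(\Bbb F))}{V}\bigl(16\intm|\textbf{H}|^2dv+\tfrac13\intm R\,dv\bigr)$ (the factor $\tfrac13$ rather than $\tfrac23$ arises from the averaging $\tfrac{1}{2}(A+16B)$); and the leading term reproduces $\tfrac{AB}{V^2}$. Collecting these three pieces gives precisely (\ref{Reilly 10}).

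There is no genuine obstacle here, since both ingredients are already in place: Theorem \ref{Euclidean} handles the Euclidean estimate, and the Proposition comparing mean curvatures under $\rho$ handles the ambient switch. The only point requiring care is bookkeeping in Step 3 — making sure the $\tfrac23\intm R\,dv$ appearing in the Euclidean bound gets scaled by $\tfrac12$ in the cross term so that it emerges as $\tfrac13\intm R\,dv$ in the final inequality, and checking that no further equality case needs to be discussed (the estimate (\ref{meanineq}) is generally strict, so we do not claim a characterization of equality).
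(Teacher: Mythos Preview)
Your proposal is correct and follows exactly the route the paper indicates: apply Theorem \ref{Euclidean} to the composed immersion $\rho\circ f$ into the Euclidean space $H_{m+1}(\Bbb F)$, then use the mean curvature comparison (\ref{meanineq}) with $n=4$ and expand. The paper's own argument is the single line ``From the relation (\ref{meanineq}) and Theorem 3.1, it follows,'' and your Steps 1--3 are precisely the unpacking of that sentence, with the algebra in Step 3 checked correctly.
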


\section{Eigenvalue of Paneitz-like operator}
In this section, we will give the Reilly-type inequalities for Paneitz-like operator for
$n$-dimensional ($n\geq 4$) compact submanifolds in Euclidean space.

The eigenvalue problem for Paneitz-like operator refer to
\begin{equation}\label{eigenPaneit}
\pab f=\Delta^2f-{\rm div}(aR\ I +b Ric)df=\ld f,\qquad f\in C^\infty(M^n)
\end{equation}
where constants $a$ and $b$ satisfies $na+b\geq 0$. When $n=4$, $a=\frac 23$ and $b=-2$,
$P'_{a,b}$ is the Paneitz operator.

 Taking the similar argument as Paneitz operator, the theorems in section 3 and
section 4 can be generalized:
\begin{thm}\label{Euclidean1}
Assume that $M^n$ is a compact n-dimensional Riemannian manifold isometrically immersed
in Euclidean space $\Bbb R^N$. Let $\ld_1$ be the first nonzero eigenvalue of
Paneitz-like operator $\pab$, then
\begin{equation}\label{Reilly11}
\ld_1 \leq \frac{\Big((n^2\intm |\textbf{H}|^2dv+(na+b)\intm
Rdv\Big)\Big(\int_{M^n}|\textbf{H}|^2dv\Big)}{\displaystyle{Vol}^2(M^n)},
\end{equation}
where $\text{Vol}(M^n)$ is the volume of manifold $M^n$. Moreover, the equality holds in
(\ref{Reilly11}),  $M^n$ is
 a minimal submanifold
of some hypersphere in $\Bbb R^N$.
\end{thm}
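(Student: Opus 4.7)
The plan is to mirror the argument of Theorem \ref{Euclidean} with the coordinate functions $x^p$ as test functions, but with the specific Paneitz combination $\tfrac{2}{3}R\,I - 2\,Ric$ replaced by the generic tensor $aR\,I + b\,Ric$. First I would translate the immersion so that $\int_{M^n} X\,dv = 0$; the estimate (\ref{Reilly11}) is translation-invariant, and this centering makes each coordinate function $x^p$ admissible in the min-max characterization of $\lambda_1(P'_{a,b})$. Summing the resulting Rayleigh inequalities over $p$ gives
\begin{equation*}
\lambda_1 \int_{M^n} |X|^2\,dv \leq \int_{M^n}\langle X, P'_{a,b} X\rangle\,dv.
\end{equation*}

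Next I would compute the right-hand side explicitly by the same integration-by-parts scheme used in the proof of Theorem \ref{Euclidean}. The biharmonic piece, after integrating by parts twice and invoking (\ref{meancur}), yields $\int_{M^n}|\Delta X|^2\,dv = n^2 \int_{M^n}|\textbf{H}|^2\,dv$. For the second-order term, one integration by parts produces
\begin{equation*}
-\int_{M^n}\sum_p x^p\,\text{div}\bigl((aR\,\delta_{ij}+bR_{ij})x^p_i\bigr)\,dv = \int_{M^n}(aR\,\delta_{ij}+bR_{ij})\sum_p x^p_i x^p_j\,dv.
\end{equation*}
Because $X$ is isometric, $\sum_p x^p_i x^p_j = \delta_{ij}$ in any local orthonormal frame (the $n$-dimensional analogue of (\ref{dr})), so this tensor trace collapses to $(na+b)R$. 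Combining the two contributions yields
\begin{equation*}
\lambda_1 \int_{M^n}|X|^2\,dv \leq n^2\int_{M^n}|\textbf{H}|^2\,dv + (na+b)\int_{M^n} R\,dv,
\end{equation*}
and the hypothesis $na+b\geq 0$ is exactly what is required to keep this right-hand side in play as an upper bound rather than an empty statement.

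To finish, I would multiply both sides by $\int_{M^n}|\textbf{H}|^2\,dv$ and combine with the Cauchy--Schwarz inequality $\bigl(\int_{M^n}\langle X,\textbf{H}\rangle\,dv\bigr)^2 \leq \int_{M^n}|X|^2\,dv\cdot \int_{M^n}|\textbf{H}|^2\,dv$ together with the Minkowski formula $-\int_{M^n}\langle X,\textbf{H}\rangle\,dv = \text{Vol}(M^n)$, which is valid in every dimension and follows from integrating $\tfrac{1}{2}\Delta|X|^2 = n + n\langle X,\textbf{H}\rangle$. This yields $\int_{M^n}|X|^2\,dv\cdot\int_{M^n}|\textbf{H}|^2\,dv \geq \text{Vol}^2(M^n)$, and dividing through gives exactly (\ref{Reilly11}).

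For the equality statement, saturation forces equality in Cauchy--Schwarz, hence $\textbf{H} = cX$ for some constant $c$, with $c\neq 0$ forced by the Minkowski formula. Then $d|X|^2 = 2\langle X,dX\rangle = 0$ gives $|X|=r$ constant, so $M^n$ lies in a hypersphere $S^{N-1}(r)\subset\Bbb R^N$; the relation $\textbf{H} = \tilde{\textbf{H}} - X/r^2$ of (\ref{hh}) then forces the intrinsic mean curvature $\tilde{\textbf{H}}$ to vanish, so $M^n$ is minimal in the hypersphere, exactly as in the closing paragraph of the proof of Theorem \ref{Euclidean}. The only real obstacle is careful bookkeeping of signs in the divergence-theorem step for the abstract pair $(a,b)$; the rest of the argument is a templated dimensional lift of the $n=4$ proof.
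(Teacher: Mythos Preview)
Your proposal is correct and is exactly the approach the paper takes: the paper does not write out a separate proof for Theorem \ref{Euclidean1} but simply says ``Taking the similar argument as Paneitz operator, the theorems in section 3 and section 4 can be generalized,'' and your write-up is precisely that templated lift of the proof of Theorem \ref{Euclidean} to the operator $P'_{a,b}$ in dimension $n$. The only mild quibble is your gloss on the hypothesis $na+b\geq 0$: the computation of $\int_{M^n}\langle X,P'_{a,b}X\rangle\,dv$ goes through regardless of the sign of $na+b$, so this condition is more a standing assumption on the class of operators considered than a step actually invoked in the derivation of (\ref{Reilly11}).
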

\begin{cor}
Assume that $M^n$ is a compact n-dimensional Riemannian manifold isometrically immersed
in Euclidean space $\Bbb R^N$. Let $\ld_1$ be the first nonzero eigenvalue of
Paneitz-like operator, then
\begin{equation}\label{Reilly12}
\ld_1\leq  n\left[n+(n-1)(na+b)\right] \frac{\intm|\textbf{H}|^4dv}{\text{Vol}(M^n)}.
\end{equation}
The equality holds if and only if $M^n$ is a round sphere in  $\Bbb R^N$ .
\end{cor}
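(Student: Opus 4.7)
The plan is to mimic the proofs of Corollary~3.2 and Corollary~3.3, replacing dimension $4$ by general $n$ and keeping track of the constants $a$, $b$. The two main ingredients available are Theorem~\ref{Euclidean1} and the Gauss equation (\ref{Gauss}) in the form $R=n^{2}|\textbf{H}|^{2}-|\textbf{h}|^{2}$.

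First I would establish the pointwise bound $R\leq n(n-1)|\textbf{H}|^{2}$. This follows from the elementary Cauchy--Schwarz estimate $|\textbf{h}|^{2}\geq n|\textbf{H}|^{2}$, obtained by writing $n\textbf{H}=\sum_{i}\textbf{h}_{ii}$ and applying $(\sum_{i}\textbf{h}_{ii})^{2}\leq n\sum_{i}|\textbf{h}_{ii}|^{2}\leq n|\textbf{h}|^{2}$; the Gauss equation then gives the claim. Since the hypothesis $na+b\geq 0$ ensures that the factor multiplying $\int R\,dv$ in (\ref{Reilly11}) is nonnegative, substituting this bound into Theorem~\ref{Euclidean1} yields
\begin{equation*}
\ld_{1}\,\text{Vol}^{2}(M^{n})\leq\bigl[n^{2}+n(n-1)(na+b)\bigr]\left(\intm|\textbf{H}|^{2}dv\right)^{2}
= n\bigl[n+(n-1)(na+b)\bigr]\left(\intm|\textbf{H}|^{2}dv\right)^{2}.
\end{equation*}

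Next I would apply Cauchy--Schwarz in the form
$\bigl(\intm|\textbf{H}|^{2}\cdot 1\,dv\bigr)^{2}\leq \text{Vol}(M^{n})\intm|\textbf{H}|^{4}dv$,
and dividing by $\text{Vol}^{2}(M^{n})$ produces precisely (\ref{Reilly12}).

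For the equality case, all of the inequalities used must be equalities: equality in Theorem~\ref{Euclidean1} forces $M^{n}$ to be a minimal submanifold of some hypersphere in $\mathbb{R}^{N}$ (so in particular $|\textbf{H}|$ is constant, namely $1/r$); equality in the Cauchy--Schwarz step $|\textbf{h}|^{2}\geq n|\textbf{H}|^{2}$ forces $\textbf{h}_{ij}=\delta_{ij}\textbf{H}$, i.e. $M^{n}$ is totally umbilical; and equality in $(\intm|\textbf{H}|^{2}dv)^{2}\leq\text{Vol}(M^{n})\intm|\textbf{H}|^{4}dv$ is automatic once $|\textbf{H}|$ is constant. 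A compact, totally umbilical submanifold of $\mathbb{R}^{N}$ with nonzero constant mean curvature is a round sphere, so this identifies $M^{n}$ exactly. The main (minor) obstacle is keeping track of the equality cases simultaneously with those inherited from the proof of Theorem~\ref{Euclidean1}; the computational steps themselves are essentially forced once the right inequalities are selected.
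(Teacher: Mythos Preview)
Your proposal is correct and follows essentially the same route the paper intends: the paper gives no separate proof for this corollary but indicates it is obtained by the ``similar argument'' of Corollaries~3.2 and~3.3, namely the chain Theorem~\ref{Euclidean1} $\to$ the Gauss-equation bound $R\leq n(n-1)|\textbf{H}|^{2}$ (via $|\textbf{h}|^{2}\geq n|\textbf{H}|^{2}$) $\to$ Cauchy--Schwarz on $\int|\textbf{H}|^{2}$, together with the equality analysis. Your explicit remark that $na+b\geq 0$ is needed to preserve the inequality direction when substituting the bound on $R$ is a point the paper leaves implicit.
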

\begin{cor}
Assume that $M^n$ is a compact n-dimensional Riemannian manifold isometrically immersed
in Euclidean space $\Bbb R^N$. Let $\ld_1$ be the first nonzero eigenvalue of
Paneitz-like operator, then
\begin{enumerate}
 \item If $N\geq n+2$ and r is an even integer, $0\leq r\leq n-1$, then
 \begin{equation}\label{Reilly13}
 \ld_1 \left(\intm H_{r}dv\right)^2\leq \left(n^2\intm |\textbf{H}|^2dv+(na+b)\intm
Rdv\right)\intm|\textbf{H}_{r+1}|^2dv;
 \end{equation}
\item If $N=n+1$ and $r$ is any integer, $0\leq r\leq n$,  then
\begin{equation}\label{Reilly14}
\ld_1 \left(\intm H_{r-1}dv\right)^2\leq \left(n^2\intm
|H|^2dv+(na+b)\intm Rdv\right)\intm|H_r|^2dv.
\end{equation}
\end{enumerate}
Equalities hold in  case (2), $M^n$ is a sphere in
$\Bbb R^{n+1}$.
\end{cor}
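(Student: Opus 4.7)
The plan is to follow the blueprint of Corollary \ref{corReilly} with the $n=4$ Paneitz operator replaced everywhere by the Paneitz-like operator $\pab$, leveraging the test-function computation already carried out for Theorem \ref{Euclidean1}. The core input is the intermediate inequality
\begin{equation*}
\ld_1\intm|X|^2dv\leq n^2\intm|\textbf{H}|^2dv+(na+b)\intm R\,dv,
\end{equation*}
which is obtained by translating the center of gravity of $X$ to the origin, feeding the coordinate functions $x^p$ into the min-max principle for $\pab$, and evaluating $\intm\langle X,\pab X\rangle dv$ via (\ref{dr})--(\ref{Gauss}); this computation is identical in structure to the Paneitz calculation done in the proof of Theorem \ref{Euclidean} but with the constants $2/3$ and $-2$ replaced by the general pair $(a,b)$.

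For part (1), with $N\geq n+2$ and $r$ even, I would multiply the above inequality by $\intm|\textbf{H}_{r+1}|^2 dv$ and then insert two classical ingredients. The first is Cauchy--Schwarz,
\begin{equation*}
\left(\intm\langle X,\textbf{H}_{r+1}\rangle dv\right)^2\leq \intm|X|^2dv\cdot\intm|\textbf{H}_{r+1}|^2dv,
\end{equation*}
and the second is Reilly's higher-order Minkowski formula from \cite{Reilly77}, $-\intm\langle X,\textbf{H}_{r+1}\rangle dv=\intm H_r\,dv$, valid for even $r$ in arbitrary codimension. Combining the three estimates gives (\ref{Reilly13}) directly.

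For part (2), with $N=n+1$, let $e_{n+1}$ denote a local unit normal along $M^n$; the vector mean curvature $\textbf{H}_{r+1}$ then reduces to $H_r e_{n+1}$, and Reilly's Minkowski formula becomes $-\intm\langle X,H_r e_{n+1}\rangle dv=\intm H_{r-1}dv$ for every $0\leq r\leq n$. Multiplying the basic inequality from the first paragraph by $\intm H_r^2 dv$ and applying Cauchy--Schwarz to $\intm\langle X,H_r e_{n+1}\rangle dv$ exactly as in part (1) yields (\ref{Reilly14}). For the equality case, equality in Cauchy--Schwarz forces $H_r e_{n+1}=cX$ for some constant $c$, which is nonzero by the Minkowski relation above (unless $H_r\equiv 0$, a trivial case); then $d|X|^2=2\langle X,dX\rangle=2c^{-1}\langle H_r e_{n+1},dX\rangle=0$ since $e_{n+1}$ is normal, so $|X|$ is a positive constant and the hypersurface $M^n\subset\Bbb R^{n+1}$ must be a round sphere.

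The only substantive step is verifying that Reilly's higher-order Minkowski formulas apply in the stated parity/codimension ranges (even $r$ when the codimension is $\geq 2$, all $r$ when the codimension is $1$); this is standard and is precisely the restriction that already appears in Corollary \ref{corReilly}. Apart from this bookkeeping there is no genuine obstacle, as every other piece of the argument is a direct transcription of the $n=4$ proof with the Paneitz eigenvalue identity replaced by its Paneitz-like analogue from Theorem \ref{Euclidean1}.
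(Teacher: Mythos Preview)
Your proposal is correct and follows exactly the route the paper takes: the paper does not even write out a separate proof here but simply says ``taking the similar argument as Paneitz operator,'' i.e.\ repeat the proof of Corollary~\ref{corReilly} with the inequality $\ld_1\intm|X|^2dv\leq n^2\intm|\textbf{H}|^2dv+(na+b)\intm R\,dv$ in place of (\ref{E1}), then multiply by $\intm|\textbf{H}_{r+1}|^2dv$ (resp.\ $\intm H_r^2\,dv$), apply Cauchy--Schwarz and the Reilly--Minkowski formula. One cosmetic slip: in the hypersurface case $\textbf{H}_{r+1}$ equals $H_{r+1}e_{n+1}$, not $H_r e_{n+1}$; but since you then work directly with $H_r e_{n+1}$ and the correct Minkowski identity $-\intm\langle X,H_r e_{n+1}\rangle dv=\intm H_{r-1}dv$, the argument itself is unaffected.
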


\begin{thm}
Assume that $M^n$ is a n-dimension compact Riemannian manifold and isometrically immerses
into the unit sphere $\Bbb S^{N-1}\subset \Bbb R^{N}$. Let $\ld_1$ be the first nonzero
eigenvalue of Paneitz operator, then
\begin{equation}\label{Reilly15}
\begin{aligned}
 \ld_1 &\leq n^2+\frac{2n^2\intm |\textbf{H}|^2dv+(na+b)\intm
Rdv}{\text{Vol}(M^n)}\\
&\qquad+\frac{n^2(\intm |\textbf{H}|^2dv)^2+(na+b)\intm|\textbf{H}|^2dv\intm
Rdv}{\text{Vol}^2(M^n)}.
\end{aligned}
\end{equation}
\end{thm}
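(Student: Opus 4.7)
The plan is to mirror the argument of Theorem~\ref{sphere}: exploit the canonical isometric embedding $i\colon\Bbb S^{N-1}\hookrightarrow\Bbb R^{N}$ to realize $M^{n}$ as a submanifold of Euclidean space, and then invoke Theorem~\ref{Euclidean1}, which has already handled the Euclidean ambient case for the operator $P'_{a,b}$. Since the Paneitz-like case in Euclidean space and the passage $\Bbb S^{N-1}\to\Bbb R^{N}$ have both been done in this paper, the theorem is essentially a bookkeeping consequence of them.

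First, I would introduce the diagram analogous to (\ref{diagram1}): let $\textbf{H}$ be the mean curvature vector of $M^{n}$ in $\Bbb S^{N-1}$ and $\textbf{H}'$ the mean curvature vector of $M^{n}$ in $\Bbb R^{N}$. Using the standard relation $\textbf{H}'=\textbf{H}-X$, where $X$ is the position vector of $M^{n}\subset\Bbb S^{N-1}\subset\Bbb R^{N}$, together with $\textbf{H}\perp X$ and $|X|=1$, I obtain the pointwise identity $|\textbf{H}'|^{2}=|\textbf{H}|^{2}+1$. The intrinsic scalar curvature $R$ of $M^{n}$ depends only on the induced metric and is therefore unchanged when we view $M^{n}$ as sitting in $\Bbb R^{N}$ rather than in $\Bbb S^{N-1}$.

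Second, I would apply Theorem~\ref{Euclidean1} to the composed immersion $M^{n}\hookrightarrow\Bbb R^{N}$, giving
\begin{equation*}
\ld_{1}\leq\frac{\bigl(n^{2}\intm|\textbf{H}'|^{2}dv+(na+b)\intm R\,dv\bigr)\bigl(\intm|\textbf{H}'|^{2}dv\bigr)}{\text{Vol}^{2}(M^{n})}.
\end{equation*}
Then I substitute $\intm|\textbf{H}'|^{2}dv=\intm|\textbf{H}|^{2}dv+\text{Vol}(M^{n})$. Writing $A=\intm|\textbf{H}|^{2}dv$, $B=\intm R\,dv$ and $V=\text{Vol}(M^{n})$ to keep the algebra tidy, the numerator expands as
\begin{equation*}
\bigl(n^{2}(A+V)+(na+b)B\bigr)(A+V)=n^{2}V^{2}+\bigl(2n^{2}A+(na+b)B\bigr)V+n^{2}A^{2}+(na+b)AB,
\end{equation*}
and dividing by $V^{2}$ produces exactly the three terms on the right-hand side of (\ref{Reilly15}).

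The main obstacle is essentially nil: the only nontrivial ingredient is the pointwise identity $|\textbf{H}'|^{2}=|\textbf{H}|^{2}+1$, which is classical and was already used for $n=4$ via (\ref{hh}) in the proof of Theorem~\ref{Euclidean}; it follows immediately from $\bar h(e_{i},e_{j})=-\delta_{ij}X$ for the second fundamental form of $\Bbb S^{N-1}\subset\Bbb R^{N}$. After that, the proof reduces to a purely algebraic expansion, so no new analytic input is needed beyond Theorem~\ref{Euclidean1}.
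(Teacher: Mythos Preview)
Your proposal is correct and follows exactly the route the paper indicates: the paper states that the results of Section~5 are obtained by ``taking the similar argument as Paneitz operator,'' i.e.\ by mimicking the proof of Theorem~\ref{sphere} with Theorem~\ref{Euclidean1} in place of Theorem~\ref{Euclidean}. Your substitution $|\textbf{H}'|^{2}=|\textbf{H}|^{2}+1$ and the algebraic expansion yielding (\ref{Reilly15}) are precisely what is required.
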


\begin{cor}
Assume that $M^n$ is a n-dimension compact Riemannian manifold and isometrically immerses
into the unit sphere $\Bbb S^{N-1}\subset \Bbb R^{N}$. Let $\ld_1$ be the first nonzero
eigenvalue of Paneitz-like operator, then
\begin{equation}\label{Reilly17}
\ld_1\leq
n\left[n+(n-1)(na+b)\right]\left(\frac{\intm|\textbf{H}|^2dv}{\text{Vol}(M^n)}+1\right)^2.
\end{equation}
\end{cor}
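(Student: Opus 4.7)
The plan is to chain the preceding theorem for submanifolds in $\mathbb{S}^{N-1}$ with an upper bound for the scalar curvature that has the same $(|\textbf{H}|^2+1)$-structure as the right-hand side of (\ref{Reilly17}). Concretely, I will estimate $\intm R\,dv$ in the previous theorem and then exploit the fact that the hypothesis $na+b\geq 0$ preserves the direction of the inequality.

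First, I would write the Gauss equation for $M^n$ as a submanifold of the unit sphere $\mathbb{S}^{N-1}$:
\begin{equation*}
R = n(n-1) + n^2|\textbf{H}|^2 - |\textbf{h}|^2,
\end{equation*}
which is the sphere analogue of (\ref{Gauss}) (and exactly specializes to the identity $R=12+16|\textbf{H}|^2-|\textbf{h}|^2$ used in the excerpt for $n=4$). The elementary Cauchy--Schwarz inequality $|\textbf{h}|^2\geq n|\textbf{H}|^2$ then yields the pointwise estimate
\begin{equation*}
R \leq n(n-1)\bigl(|\textbf{H}|^2 + 1\bigr).
\end{equation*}

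Next, I would integrate and substitute into inequality (\ref{Reilly15}). Writing $A=\intm|\textbf{H}|^2dv$ and $V=\text{Vol}(M^n)$, and using $(na+b)\geq 0$ to justify the substitution, the bound becomes
\begin{equation*}
\ld_1 \leq n^2 + \frac{2n^2 A + (na+b)\,n(n-1)(A+V)}{V} + \frac{n^2 A^2 + (na+b)\,n(n-1)\,A(A+V)}{V^2}.
\end{equation*}
Setting $t = A/V$ for clarity, the right-hand side rearranges as
\begin{equation*}
n^2(1+t)^2 + (na+b)\,n(n-1)(1+t) + (na+b)\,n(n-1)\,t(1+t) = \bigl[n^2 + (na+b)\,n(n-1)\bigr](1+t)^2,
\end{equation*}
where I have used $(1+t) + t(1+t) = (1+t)^2$ to collect the $(na+b)$-terms. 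Factoring out $n$ then produces the claimed inequality (\ref{Reilly17}).

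The only mildly delicate step is the algebraic grouping in the last display; the three fractions in (\ref{Reilly15}) must combine so that the $n^2$-terms complete the square in $(1+t)$ and the $(na+b)$-terms factor as $(1+t)^2$. This works exactly because the mixed scalar-curvature term in (\ref{Reilly15}) is $(na+b)A\intm R\,dv / V^2$ rather than something with an unmatched structure, so the same bound $R\leq n(n-1)(|\textbf{H}|^2+1)$ yields the factor $(1+t)$ both in the numerator's middle term and in the cross term. No other ingredients are required beyond the sphere Gauss equation, the pointwise inequality $|\textbf{h}|^2\geq n|\textbf{H}|^2$, and the assumption $na+b\geq 0$.
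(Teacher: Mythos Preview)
Your proposal is correct and follows essentially the same route the paper uses (implicitly, since Section~5 only says ``taking the similar argument''): bound $R$ via the sphere Gauss equation together with $|\textbf{h}|^2\ge n|\textbf{H}|^2$ to get $R\le n(n-1)(|\textbf{H}|^2+1)$, insert this into (\ref{Reilly15}) using $na+b\ge 0$, and collect the resulting quadratic in $1+\intm|\textbf{H}|^2dv/\text{Vol}(M^n)$. The algebraic regrouping you carry out is exactly the $n$-dimensional version of the computation the paper indicates for $n=4$ just before (\ref{Reilly8}).
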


\begin{thm}Assume that $M^n$ is a compact Riemannian manifold
isometrically immersed into the projective space $\Bbb FP^m$, Let $\ld_1$ be the first
nonzero eigenvalue of Paneitz-like operator, then
\begin{equation}\label{Reilly 19}
\begin{aligned}
\ld_1 \leq& 4n^2\left(1+\frac{d(\Bbb
F)}{n}\right)+2\left(1+\frac{d(\Bbb
F)}{n}\right)\left(2n^2+na+b\right)\frac{\intm Rdv}{\text{Vol}(M^n)}
\\&+\frac{n^2\intm
|\textbf{H}|^2dv+(na+b)\intm|\textbf{H}|^2dv\intm Rdv}{\text{Vol}^2(M^n)}.
\end{aligned}
\end{equation}
\end{thm}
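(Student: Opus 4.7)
The plan is to reduce to the Euclidean Theorem \ref{Euclidean1} via the canonical first standard embedding $\rho: \Bbb FP^m \to H_{m+1}(\Bbb F)$, following the exact pattern used to derive (\ref{Reilly 10}) from Theorem \ref{Euclidean}. Composing the given isometric immersion $f: M^n \to \Bbb FP^m$ with $\rho$ produces an immersion $\rho\circ f: M^n \to H_{m+1}(\Bbb F)$ into Euclidean space, whose mean-curvature vector I denote by $\textbf{H}'$. The intrinsic scalar curvature $R$ of $M^n$ is independent of the ambient space, so the same symbol $R$ may be used on both sides when we compare the two immersions.

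First I apply Theorem \ref{Euclidean1} to $\rho\circ f$, obtaining
\begin{equation*}
  \lambda_1 \leq \frac{\bigl(n^2 \int_{M^n} |\textbf{H}'|^2 dv + (na+b) \int_{M^n} R\, dv\bigr)\bigl(\int_{M^n} |\textbf{H}'|^2 dv\bigr)}{\text{Vol}^2(M^n)}.
\end{equation*}
Next, the mean-curvature comparison (\ref{meanineq}) provides the pointwise bound $|\textbf{H}'|^2 \leq |\textbf{H}|^2 + 2(n+d(\Bbb F))/n$, which upon integration yields
\begin{equation*}
  \int_{M^n} |\textbf{H}'|^2 dv \;\leq\; \int_{M^n} |\textbf{H}|^2 dv \;+\; c\,\text{Vol}(M^n),\qquad c:=2\Bigl(1+\tfrac{d(\Bbb F)}{n}\Bigr).
\end{equation*}
Substituting this bound into both factors of the numerator above and expanding the product will yield an expression that, once organized by powers of $1/\text{Vol}(M^n)$, reproduces (\ref{Reilly 19}): a purely constant piece of order $n^2 c^2$, a mixed term linear in $1/\text{Vol}(M^n)$ combining $\int_{M^n} |\textbf{H}|^2 dv$ and $\int_{M^n} R\, dv$, and the remaining quadratic-in-$1/\text{Vol}(M^n)$ contribution inherited directly from Theorem \ref{Euclidean1}.

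The main obstacle is purely algebraic bookkeeping. Setting $A = \int_{M^n} |\textbf{H}|^2 dv$, $B = \int_{M^n} R\, dv$, and $V = \text{Vol}(M^n)$, one expands
\begin{equation*}
\bigl(n^2(A + cV) + (na+b)B\bigr)(A + cV)/V^2
\end{equation*}
and matches the resulting terms with the three groups collected in (\ref{Reilly 19}). No new geometric or analytic input is required beyond the mean-curvature comparison (\ref{meanineq}) and Theorem \ref{Euclidean1}; the argument is essentially a verification at the level of integrated inequalities.
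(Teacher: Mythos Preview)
Your approach is exactly the one the paper intends: Section~5 gives no separate proof and simply says the Section~3--4 arguments carry over, so composing with $\rho$, invoking Theorem~\ref{Euclidean1}, applying the pointwise bound (\ref{meanineq}), and expanding is precisely the route.

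One caution, however: your final sentence claims the expansion ``reproduces (\ref{Reilly 19})'', and if you actually carry out the algebra with $A=\int_{M^n}|\textbf{H}|^2dv$, $B=\int_{M^n}R\,dv$, $V=\mathrm{Vol}(M^n)$, $c=2(1+d(\Bbb F)/n)$ you will obtain
\[
\lambda_1\;\le\;n^2c^2\;+\;\frac{c\bigl(2n^2A+(na+b)B\bigr)}{V}\;+\;\frac{n^2A^2+(na+b)AB}{V^2},
\]
which does \emph{not} match the displayed (\ref{Reilly 19}) term by term. The printed formula (\ref{Reilly 19}) contains typographical slips: the constant term is missing a square, the middle term has $\int R$ where $\int|\textbf{H}|^2$ should appear, and the last term is missing a factor of $\int|\textbf{H}|^2$. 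You can confirm this by specializing to $n=4$, $a=\tfrac23$, $b=-2$ and noting that (\ref{Reilly 19}) then fails to recover (\ref{Reilly 10}), whereas the expression above does. So your method is correct and yields the intended inequality; just do not be alarmed when the bookkeeping does not land on the formula exactly as typeset.
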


\vspace{1cm}

 \textbf{Acknowledgements}\qquad

The authors would like to express their thanks to the referee for some useful comments.


\providecommand{\bysame}{\leavevmode\hbox
to3em{\hrulefill}\thinspace}

\end{document}